\newtheorem{theorem}{Theorem}[section]
\newtheorem{remark}{Remark}[section]
\newtheorem{lemma}[theorem]{Lemma}
\newtheorem*{definition*}{Definition}
\crefname{equation}{}{}
\crefname{figure}{{\sc Figure}}{{\sc Figure}}
\crefname{subsection}{Subsection}{Subsections}
\def\P{\mathcal{P}}
\def\L{\mathcal{L}}
\begin{document}

\title{An improved point-line incidence bound over arbitrary finite fields via the VC-dimension theory}
\author{Alex Iosevich\thanks{Department of Mathematics, University of Rochester. Email:iosevich@math.rochester.edu}\and Thang Pham\thanks{University of Science, Vietnam National University, Hanoi. Email: phamanhthang.vnu@gmail.com}\and Steven Senger\thanks{Department of Mathematics, Missouri State University. Email: StevenSenger@MissouriState.edu}\and Michael Tait\thanks{Department of Mathematics and Statistics, Villanova University. Email: michael.tait@villanova.edu. }}
\maketitle
\begin{abstract}
The main purpose of this paper is to prove that the point-line incidence bound due to Vinh (2011) over arbitrary finite fields can be improved in certain ranges by using tools from the VC-dimension theory. As consequences, a number of applications will be discussed in detail.
\end{abstract}
\section{Introduction}
Let $\mathbb{F}_q$ be a finite field of order $q$, where $q$ is a prime power. Let $P$ be a set of points in $\mathbb{F}_q^2$ and $L$ be a set of lines in $\mathbb{F}_q^2$. The number of incidences between $P$ and $L$ is denoted by $I(P, L)$. We use the notation $f \ll g$ to mean that $f = O(g)$.

The following point-line incidence bound is due to Vinh \cite{vinh2011szemeredi}, which was proved by using the spectral graph theory method. 
\begin{theorem}\label{vinhthm1}
    The number of incidences between $P$ and $L$ satisfies
    \begin{equation}\label{vinheq1}\left\vert I(P, L)-\frac{|P||L|}{q} \right\vert\ll q^{\frac{1}{2}}\sqrt{|P||L|}.\end{equation}
\end{theorem}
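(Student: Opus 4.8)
The plan is to realize $I(P,L)$ as a bilinear form on the point-line incidence graph of $\mathbb{F}_q^2$ and control the error term spectrally, i.e.\ to run the expander mixing lemma. Let $M$ be the $q^2 \times (q^2+q)$ bipartite incidence matrix whose rows are indexed by all points of $\mathbb{F}_q^2$, whose columns are indexed by all $q(q+1)$ lines of $\mathbb{F}_q^2$, and with $M_{p,\ell}=1$ iff $p\in\ell$. This graph is biregular: every point lies on exactly $q+1$ lines and every line contains exactly $q$ points, so the total number of incidences is $q^2(q+1)$ and the edge density is $1/q$, which already explains the main term $|P||L|/q$. Writing $\mathbf{1}$ and $\mathbf{1}'$ for the all-ones vectors on the point side and the line side, I record that $M\mathbf{1}'=(q+1)\mathbf{1}$ and $M^{T}\mathbf{1}=q\,\mathbf{1}'$, so the all-ones vectors are the top (left/right) singular vectors of $M$.

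The heart of the argument is the singular-value computation, and I expect this to be the main step on which everything rests. I would form the Gram matrix $MM^{T}$, whose $(p,p')$ entry counts the lines through both $p$ and $p'$. The diagonal entries equal $q+1$, and the key geometric fact that two distinct points determine a unique line forces every off-diagonal entry to equal $1$. Hence
\begin{equation*}
MM^{T}=q\,\mathrm{Id}+J,
\end{equation*}
where $J$ is the $q^2\times q^2$ all-ones matrix. Since $J$ has eigenvalues $q^2$ (once) and $0$ (with multiplicity $q^2-1$), the eigenvalues of $MM^{T}$ are $q^2+q$ and $q$, so the singular values of $M$ are $\sqrt{q^2+q}$ once and $\sqrt{q}$ otherwise. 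The decisive point is that the \emph{second} largest singular value is exactly $q^{1/2}$, which will become the constant in the error bound.

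Finally I would decompose the indicator vectors along and orthogonal to the top singular directions: write $\mathbf{1}_P=\tfrac{|P|}{q^2}\mathbf{1}+\mathbf{v}$ and $\mathbf{1}_L=\tfrac{|L|}{q^2+q}\mathbf{1}'+\mathbf{w}$ with $\mathbf{v}\perp\mathbf{1}$ and $\mathbf{w}\perp\mathbf{1}'$. Expanding $I(P,L)=\mathbf{1}_P^{T}M\mathbf{1}_L$ and using $M\mathbf{1}'=(q+1)\mathbf{1}$, $M^{T}\mathbf{1}=q\mathbf{1}'$, the two cross terms vanish by orthogonality and the leading term evaluates to $|P||L|/q$, leaving
\begin{equation*}
I(P,L)-\frac{|P||L|}{q}=\mathbf{v}^{T}M\mathbf{w}.
\end{equation*}
Because $\mathbf{v}$ is orthogonal to the top left singular vector, $|\mathbf{v}^{T}M\mathbf{w}|\le q^{1/2}\|\mathbf{v}\|\,\|\mathbf{w}\|$, and the elementary bounds $\|\mathbf{v}\|^2=|P|-|P|^2/q^2\le|P|$ and $\|\mathbf{w}\|^2\le|L|$ yield $|\mathbf{v}^{T}M\mathbf{w}|\le q^{1/2}\sqrt{|P||L|}$, which is exactly \eqref{vinheq1}. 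The only genuinely nontrivial ingredient is the clean spectral gap coming from $MM^{T}=q\,\mathrm{Id}+J$; once that is in hand, the projection estimate is routine.
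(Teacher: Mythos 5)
Your argument is correct and complete: the identity $MM^{T}=q\,\mathrm{Id}+J$, the resulting second singular value $q^{1/2}$, the vanishing of the cross terms, and the norm bounds $\|\mathbf{v}\|^2\le|P|$, $\|\mathbf{w}\|^2\le|L|$ all check out, and in fact you obtain the inequality with constant $1$. The paper does not reprove this theorem --- it cites Vinh and notes it ``was proved by using the spectral graph theory method'' --- and your proof is precisely that spectral (expander-mixing-lemma) argument, so it matches the intended approach.
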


To see how good this theorem is, we compare to the well-known Cauchy-Schwarz estimates. 
\begin{equation}\label{C-S}
    I(P, L)\le \min\{|P|^{1/2}|L|+|P|, ~|P||L|^{1/2}+|L|\}.
\end{equation}
More precisely,
\begin{enumerate}
    \item If $ q^3 = o(|P|L|)$, Theorem \ref{vinhthm1} offers the sharp bound $I(P, L)=(1+o(1))|P||L|/q$.
    \item If $|P|>q$ and $|L|>q$, then the bound \eqref{vinheq1} is better than those in (\ref{C-S}). 
    \item When $|P|<q$ or $|L|<q$ and $|P|\ne |L|$, there is no non-trivial bound in the literature. 
\end{enumerate}
Although there are a number of elementary proofs for the above theorem, for instance, Cilleruelo's proof using Sidon sets in  \cite{cilleruelo2012combinatorial} or a geometric counting argument due to Murphy and Petridis in \cite{murphy2016point}, it is very difficult to say under what conditions the theorem will be improved in the general setting of arbitrary finite fields and what methods will be suitable for this purpose. 

In the setting of prime fields, there exists a number of better results in the literature. The first paper was written by Bourgain, Katz and Tao \cite{BKT} in 2004, in which they proved that for any point set $\P$ and any line set $\L$ in $\mathbb{F}_p^2$ with $|\P|=|\L|=N=p^{\alpha}$, $0<\alpha<2$, we have \begin{equation}\label{eqszt1}I(\P, \L)\ll N^{\frac{3}{2}-\varepsilon},\end{equation} 
where $\varepsilon$ is positive and depends on $\alpha$. The exponent $\varepsilon$ has been made explicit over the years, and the most recent progress is due to Stevens and de~Zeeuw \cite{ffrank}, who showed $I(P, L)\ll |P|^{11/15}|L|^{11/15}$ under a certain conditions on the sizes of $P$ and $L$. This improves earlier results due to Helfgott and Rudnev \cite{HH} and 
Jones \cite{jone}. While the methods in \cite{HH, jone} rely on a pivoting argument and sum-product type problems, the main ingredient in Stevens and De Zeeuw's proof is a point-plane incidence bound due to Rudnev \cite{rudnev2018number}. If we assume to have some additional conditions on the energy of the sets, then some improvements can be obtained, see \cite{petridis2022energy, rudnev2022growth}. We also note that the Stevens-De Zeeuw's result also holds over arbitrary fields, but the bounds obtained are in terms of the characteristic of the field so the sets where the bound is good may be small compared to $q$ if $q$ is not prime. Jones \cite{jones2011explicit} proved that the argument in \cite{HH, jone} can be used to derive a result of the form (\ref{eqszt1}) when $|P|=|L|=N$ over arbitrary finite fields as long as the intersection of the point set $P$ and translations of all subfields satisfies some additional conditions. 

The main purpose of this paper is to show that tools from the VC-dimension theory can be used to prove an improved point-line incidence bound over $\mathbb{F}_q$. A number of applications will also be discussed.
\subsection{Statement of results} 
\begin{theorem}\label{line-incidence}
Let $L$ be a set of lines in $\mathbb{F}_q^2$ of the form $y=ax+b$ with $a\ne 0$ and $P=A\times B\subset \mathbb{F}_q^2$. Assume that $|L||A|>q^{\alpha}\max\{|A|, |L_x|\}$ for some $\alpha\in (0, 1)$, then 
\[I(A\times B, L)\ll \frac{|L||A||B|^{1/2}}{q^{\alpha/2}}+q^{\alpha}|L|^{1/2}|A|^{1/2}|B|^{1/2}.\]
Here $L_x:=\{a\colon y=ax+b\in L\}$. In addition, if $L$ also contains lines of the from $y=c$ or $x=c$, then
\[I(A\times B, L)\ll \frac{|L||A||B|^{1/2}}{q^{\alpha/2}}+q^{\alpha}|L|^{1/2}|A|^{1/2}|B|^{1/2}+2|A||B|.\]
\end{theorem}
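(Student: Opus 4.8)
The plan is to reduce the incidence count to a count of collinear pairs via Cauchy--Schwarz, and then to control that count with the VC-dimension input; the extra vertical and horizontal lines are a trivial add-on handled at the end. For each $\ell\in L$ write $i_\ell=|\ell\cap(A\times B)|$, so that $I(A\times B,L)=\sum_{\ell\in L}i_\ell$. Since every line of $L$ is non-vertical it meets each column $\{u\}\times B$ (for $u\in A$) in at most one point, whence $i_\ell=|\{u\in A:\ a_\ell u+b_\ell\in B\}|$. By Cauchy--Schwarz,
\[
I(A\times B,L)\ \le\ |L|^{1/2}\Big(\sum_{\ell\in L}i_\ell^{\,2}\Big)^{1/2},
\]
so it suffices to establish the second-moment estimate
\[
\sum_{\ell\in L}i_\ell^{\,2}\ \ll\ \frac{|L|\,|A|^2\,|B|}{q^{\alpha}}\ +\ q^{2\alpha}\,|A|\,|B| ,
\]
which, on taking square roots and multiplying by $|L|^{1/2}$, reproduces exactly the two claimed terms. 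This already explains the shape of the bound and why it can beat the Cauchy--Schwarz estimate \eqref{C-S}: the whole gain is encoded in the $q^{-\alpha}$ saving in the second moment.

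Next I would rewrite the second moment combinatorially. We have $\sum_{\ell}i_\ell^{\,2}=I(A\times B,L)+Q$, where $Q:=\sum_\ell i_\ell(i_\ell-1)$ counts ordered pairs of distinct collinear points of $A\times B$ whose joining line lies in $L$; because a non-vertical line meets each column once, any such pair lies in two distinct columns $u_1\neq u_2$. Hence
\[
Q\ =\ \sum_{\substack{u_1,u_2\in A\\ u_1\neq u_2}} N(u_1,u_2),\qquad N(u_1,u_2)=\big|\{\ell\in L:\ a_\ell u_1+b_\ell\in B,\ a_\ell u_2+b_\ell\in B\}\big|,
\]
so that $N(u_1,u_2)$ is the number of lines of $L$ meeting $A\times B$ in both column $u_1$ and column $u_2$. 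The linear term $I(A\times B,L)$ is never dominant in the stated range and is absorbed into the final bound, so the entire problem reduces to proving $Q\ll \frac{|L||A|^2|B|}{q^{\alpha}}+q^{2\alpha}|A||B|$.

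This last estimate is where the VC-dimension theory enters, and it is the main obstacle. The object to bound, $Q$, is the co-degree sum of the set system on $L$ whose ranges are $T_u=\{\ell\in L:\ a_\ell u+b_\ell\in B\}$ for $u\in A$; equivalently, dualizing, it measures how the $|L|$ points dual to the lines distribute among the $\approx|A|^2$ affine grids cut out by pairs of columns. The plan is to feed in the bounded VC-dimension of the point--line incidence relation (lines over any field shatter no three points, so this relation has VC-dimension two) and run a Haussler-type packing argument: decompose the lines dyadically by the size of $i_\ell$ and use the packing bound, together with the product structure of $A\times B$, to cap how many lines can have pairwise large traces at each scale. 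The threshold $q^{\alpha}$ in the hypothesis $|L||A|>q^{\alpha}\max\{|A|,|L_x|\}$ is precisely the scale at which the two resulting regimes balance, and it is exactly what keeps the packing estimate in its nontrivial range. The genuinely hard part is converting the VC/packing input into the claimed $q^{-\alpha}$ gain for $Q$, rather than the trivial bound $Q\le |A|^2\min\{|L|,|B|^2\}$; this transfer, and the verification that the hypothesis is exactly the condition it requires, is the technical heart of the argument.

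Finally, the additional term in the second statement is immediate. A vertical line $x=c$ meets $A\times B$ in $|B|$ points when $c\in A$ and in none otherwise, so the vertical lines of $L$ contribute at most $|A|\cdot|B|$ incidences in total; likewise a horizontal line $y=c$ (slope $0$, excluded from the main family $a\neq 0$) contributes $|A|$ incidences only when $c\in B$, for at most $|A|\cdot|B|$ in total. Adding these to the main bound yields the extra $2|A||B|$.
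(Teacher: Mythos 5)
Your proposal has a genuine gap: the core estimate is never proved. You reduce the theorem, via Cauchy--Schwarz over the lines, to the second-moment bound $\sum_{\ell}i_\ell^2\ll |L||A|^2|B|q^{-\alpha}+q^{2\alpha}|A||B|$, and then explicitly defer ``the technical heart of the argument.'' That deferred step is the entire content of the theorem; everything you do prove (the Cauchy--Schwarz reduction, the $2|A||B|$ contribution of horizontal and vertical lines) is routine. Worse, the route you sketch for closing the gap is unlikely to work: you propose to use the VC-dimension (equal to $2$) of the planar point--line incidence relation together with a Haussler-type packing argument, but as the paper itself observes in its ``Main ideas'' discussion, the planar $d=2$ bound yields nothing beyond the trivial Cauchy--Schwarz estimate \eqref{C-S}. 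The $q^{-\alpha}$ saving cannot come from the two-dimensional set system.

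The paper's actual mechanism is different in two ways. First, it applies Cauchy--Schwarz over the coordinate set $B$ rather than over $L$, giving $I(A\times B,L)\le |B|^{1/2}\,E^{1/2}$ where $E=\#\{(a,b,x,a',b',x')\in L\times A\times L\times A:\ ax+b=a'x'+b'\}$; this is a single linear equation in six variables, unlike your quantity $Q$, which imposes two simultaneous membership conditions $au_1+b\in B$, $au_2+b\in B$ and does not linearize. Second, that equation is interpreted as an incidence between the point $(x,a',b')$ and the plane $aX-x'Y+Z=-b$ in $\mathbb{F}_q^3$, where the relevant set system has VC-dimension $3$ and the packing lemma (Lemma \ref{packing}), via Theorem \ref{lightLines}, gives $E\ll |L|^2|A|^2q^{-\alpha}+|L||A|q^{2\alpha}$. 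The hypothesis $|L||A|>q^{\alpha}\max\{|A|,|L_x|\}$ is not a ``balancing of regimes'' as you suggest: it is precisely the condition $|P|\ge 2kq^{\alpha}$ of Theorem \ref{lightLines}, where $k=\max\{|A|,|L_x|\}$ bounds the maximal number of collinear points in the lifted point set $\{(x,a',b')\}\subset\mathbb{F}_q^3$ (no plane of the constructed family contains a vertical line, so collinearity is controlled by projecting to the first two coordinates). Without this three-dimensional lift, your outline has no source for the $q^{-\alpha}$ gain.
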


The key ingredient in the proof of this theorem is a point-plane incidence theorem in $\mathbb{F}_q^3$, which will be proved by using tools from VC dimension theory. In this paper, we present two versions, where the main difference comes from either the maximal number of collinear points or collinear planes. 

\begin{theorem}\label{plane-inci}
     Let $P$ be a set of points and $\Pi$ be a set of planes in $\mathbb{F}_q^3$. Assume  planes in $\Pi$ are of the form $a\cdot x=1$ with $a\in \mathbb{F}_q^3$. If $|\Pi|\ge 2q^{1+\alpha}$ for some $\alpha\in (0, 1)$, then
    \[I(P, \Pi)\ll \frac{|P||\Pi|}{q^{\alpha}}+|\Pi|q^{2\alpha}.\]
Symmetrically, if $|P|\ge 2q^{1+\alpha}$ for some $\alpha\in (0, 1)$, then
    \[I(P, \Pi)\ll \frac{|P||\Pi|}{q^{\alpha}}+|P|q^{2\alpha}.\]
\end{theorem}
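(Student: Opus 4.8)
The plan is to prove the first statement, where the hypothesis is on $|\Pi|$; the second follows by the point--plane duality $x\cdot a=1$, which interchanges the roles of $P$ and $\Pi$ while preserving the entire set-up, so I will only describe the first case. The tool from VC-dimension theory will be Haussler's packing lemma. I set up the range space whose ground set is $\Pi$ and whose ranges are indexed by the points: to each $x\in P$ I associate $H_x=\{\pi\in\Pi:x\in\pi\}$, the set of planes of $\Pi$ through $x$, and I write $d(x)=|H_x|$. Since every plane has the form $a\cdot z=1$, incidence is the symmetric relation $a\cdot x=1$, so this set system is isomorphic (by transposition) to the system of planes $\{z:a\cdot z=1\}$ acting on points of $\mathbb{F}_q^3$. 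I would first record that its VC dimension is at most $3$: four points can never be shattered, because realizing the full four-element subset forces the points to be coplanar, and then any plane through three non-collinear of them is that common plane and cannot omit the fourth.

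Two elementary geometric facts drive the count. First, any two distinct points of $\mathbb{F}_q^3$ lie on at most $q$ common planes of the form $a\cdot z=1$, since $\{a:a\cdot x=1,\ a\cdot x'=1\}$ is an affine line in $a$-space. Hence if $d(x),d(x')\ge\lambda$ with $x\ne x'$, then
\[
|H_x\triangle H_{x'}|=d(x)+d(x')-2|H_x\cap H_{x'}|\ge 2\lambda-2q,
\]
and when $\lambda>q$ the traces $H_x$ are moreover pairwise distinct (two equal traces of size $>q$ would place more than $q$ planes through both points). Thus, for $\lambda\ge 2q$ the family $\{H_x:d(x)\ge\lambda\}$ is a $\lambda$-separated packing in a range space of VC dimension at most $3$ on a ground set of size $|\Pi|$, and Haussler's packing lemma yields
\[
M(\lambda):=\#\{x\in P:d(x)\ge\lambda\}\ll\left(\frac{|\Pi|}{\lambda}\right)^{3}.
\]

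Next I would split $I(P,\Pi)=\sum_{x\in P}d(x)$ at the threshold $\mu_0:=|\Pi|/q^{\alpha}$; the hypothesis $|\Pi|\ge 2q^{1+\alpha}$ is precisely what guarantees $\mu_0\ge 2q$, so the packing bound is available above the threshold. The points with $d(x)<\mu_0$ contribute at most $\mu_0|P|=|P||\Pi|/q^{\alpha}$, which is the first term. For the remaining points I sum dyadically over the levels $d(x)\in[2^{j}\mu_0,2^{j+1}\mu_0)$, bounding their number by $M(2^{j}\mu_0)\ll(|\Pi|/2^{j}\mu_0)^3$; the level-$j$ contribution is $\ll |\Pi|^{3}\mu_0^{-2}2^{-2j}$, and the geometric series is dominated by its first term, giving $\ll|\Pi|^{3}\mu_0^{-2}=|\Pi|q^{2\alpha}$, the second term. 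Adding the two contributions proves the stated bound, and the symmetric statement (threshold on the plane-degrees, hypothesis $|P|\ge 2q^{1+\alpha}$) follows verbatim by duality.

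The step I expect to be the crux is the packing estimate for $M(\lambda)$: one must correctly marry the bounded VC dimension with the ``two points share at most $q$ planes'' separation, so that the collinearity factor $q$ is absorbed into the separation parameter rather than surfacing, as it would in a bare Cauchy--Schwarz count, as an extra multiplicative $q$. Pinning down the exponent $3$ in $M(\lambda)\ll(|\Pi|/\lambda)^{3}$ is exactly what converts the threshold $\mu_0=|\Pi|/q^{\alpha}$ into the term $q^{2\alpha}$; a weaker bound on the VC dimension would degrade this exponent, so verifying that the relevant range space really has VC dimension at most $3$ is the load-bearing part of the argument.
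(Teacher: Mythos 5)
Your proposal is correct and follows essentially the same route as the paper: the same split of $I(P,\Pi)=\sum_x d(x)$ at the threshold $|\Pi|/q^{\alpha}$, the same dyadic decomposition of the heavy points, and the same application of the packing lemma to a VC-dimension-$3$ range space, with the bound $|H_x\cap H_{x'}|\le q$ and the hypothesis $|\Pi|\ge 2q^{1+\alpha}$ supplying the required separation. The only (welcome) difference is that you explicitly note the traces $H_x$ are pairwise distinct above the threshold, a point the paper leaves implicit.
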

If we relax the condition $|P|\ge  2q^{1+\alpha}$ to $|P|\ge 2kq^{\alpha}$, where $k$ is the maximal number of collinear points in $P$, then we have the next theorem.

\begin{theorem}\label{lightLines}Let $P$ be a set of points and $\Pi$ be a set of planes in $\mathbb{F}_q^3$. Assume  planes in $\Pi$ are of the form $a\cdot x=1$ with $a\in \mathbb{F}_q^3$. If there is no line that contains $k$ points from $P$ and is contained in two planes in $\Pi$, and $|P|\ge 2kq^\alpha$, then
   \[I(P, \Pi)\ll \frac{|P||\Pi|}{q^{\alpha}}+|P|q^{2\alpha}.\]
\end{theorem}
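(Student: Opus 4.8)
The plan is to run the same packing-lemma strategy that underlies Theorem~\ref{plane-inci}, but to use the hypothesis on collinear points to replace the crude ``a line carries at most $q$ points'' estimate by the sharper quantity $k$. First I would record the two structural facts about the incidence system, working in the primal set system on the ground set $P$ whose sets are $\{P\cap\pi:\pi\in\Pi\}$. The first fact is a codegree bound: for distinct $\pi,\pi'\in\Pi$ the set $P\cap\pi\cap\pi'$ lies on the line $\pi\cap\pi'$, which is contained in two planes of $\Pi$, so by hypothesis $|P\cap\pi\cap\pi'|\le k-1$ (the case of parallel planes being trivial). The second fact is that this set system has bounded VC dimension: since each plane is $\{x:a\cdot x=1\}$, membership is a single affine-linear condition, and a short computation shows that no four points of $\mathbb{F}_q^3$ can be shattered (realizing the ``full'' subset forces coplanarity, which then obstructs some triple) while three affinely independent points can be, so the VC dimension equals $3$.

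Second, I would split the planes by richness at the threshold $\tau=|P|/q^{\alpha}$. The ``poor'' planes, those with $|P\cap\pi|\le\tau$, contribute at most $|\Pi|\tau=|P||\Pi|/q^{\alpha}$, which is the first term of the bound. For the ``rich'' planes I would dyadically group them into classes $\Pi_j=\{\pi:2^j\le|P\cap\pi|<2^{j+1}\}$ with $2^j>\tau$. The crucial point is that the hypothesis $|P|\ge 2kq^{\alpha}$ gives $\tau=|P|/q^{\alpha}\ge 2k$, so every rich class satisfies $2^j>2k$; combined with the codegree bound $|P\cap\pi\cap\pi'|\le k-1$, any two planes in $\Pi_j$ have symmetric difference at least $2\cdot 2^{j}-2k>2^{j}$. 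Thus the sets in $\Pi_j$ form a $2^j$-separated packing, and Haussler's packing lemma applied with VC dimension $3$ yields $|\Pi_j|\ll (|P|/2^j)^{3}$.

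Finally I would sum the rich contribution $\sum_{2^j>\tau} 2^{j+1}|\Pi_j|\ll \sum_{2^j>\tau}|P|^3 2^{-2j}$. Since the summand decreases geometrically in $j$, the sum is dominated by its smallest term $2^j\approx\tau=|P|/q^{\alpha}$, giving $|P|^3(|P|/q^{\alpha})^{-2}=|P|q^{2\alpha}$, the second term. Adding the poor and rich contributions then proves the stated bound.

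I expect the main obstacle to be the careful bookkeeping around the separation threshold: one must verify that the hypothesis $|P|\ge 2kq^{\alpha}$ is exactly strong enough to push $\tau$ above $2k$, so that every rich dyadic class is automatically a well-separated packing, and that the exponent $3$ coming from the VC dimension is precisely what turns the geometric sum into $|P|q^{2\alpha}$ rather than some other power of $q^{\alpha}$. This is also the only place where the collinearity hypothesis enters, and it is exactly what lets us weaken the condition $|P|\ge 2q^{1+\alpha}$ of Theorem~\ref{plane-inci} (which merely secures the separation threshold using the trivial ``at most $q$ points on a line'' bound) to the condition $|P|\ge 2kq^{\alpha}$.
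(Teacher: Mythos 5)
Your proposal is correct and follows essentially the same route as the paper's own proof: the same primal set system $\{P\cap\pi:\pi\in\Pi\}$, the same VC-dimension-at-most-$3$ observation, the same codegree bound from the collinearity hypothesis, and the same dyadic split at threshold $|P|/q^{\alpha}$ followed by the packing lemma and a geometric sum. The only differences are cosmetic (your $k-1$ versus the paper's $k$ in the codegree bound, and a slightly different indexing of the dyadic classes).
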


\subsection{Applications}
We now discuss some more applications which might be of independent interest. 

\paragraph{Intersection of planes in $\mathbb{F}_q^3$:} 


The next theorem tells us that given $U'\subset U$ with $|U'|^3\ll |U|^2$, then there are many pairs $(u, v)\in U\times U$ such that the corresponding planes have the same intersection with $U'$, namely, $\pi_u\cap U'=\pi_v\cap U'.$

The formal statement reads as follows. 
\begin{theorem}\label{thm1.9}
Given $U\subset \mathbb{F}_q^3$ with $|U|\gg q^2$, and $U'\subset U$ with $|U'|^3\ll |U|^2$, then the number of pairs  $(u, v)\in U^2$ such that $\pi_u\cap U'=\pi_v\cap U'$ is at least $|U|^2/|U'|^3$.    
\end{theorem}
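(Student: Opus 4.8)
The plan is to reinterpret the condition $\pi_u\cap U'=\pi_v\cap U'$ as the statement that $u$ and $v$ induce the same \emph{trace} on $U'$ under the point–plane duality $w\mapsto\{x:w\cdot x=1\}$, and then to count coincident pairs by a Cauchy–Schwarz (pigeonhole) argument once the number of distinct traces is controlled. Concretely, define $\phi:U\to 2^{U'}$ by $\phi(u)=\pi_u\cap U'=\{w\in U':u\cdot w=1\}$. The fibres of $\phi$ partition $U$, and the number of pairs $(u,v)\in U^2$ with $\pi_u\cap U'=\pi_v\cap U'$ is exactly $\sum_{T}|\phi^{-1}(T)|^2$, summed over the distinct traces $T$. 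Writing $N:=|\phi(U)|$ for the number of distinct traces, Cauchy–Schwarz gives
\[
\sum_{T}|\phi^{-1}(T)|^2\ \ge\ \frac{\left(\sum_{T}|\phi^{-1}(T)|\right)^2}{N}\ =\ \frac{|U|^2}{N}.
\]
Thus it suffices to prove the trace bound $N\le|U'|^3$.

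The key step, and the point where the VC-dimension theory enters, is this bound $N\le|U'|^3$. The family of traces is the restriction to $U'$ of the set system $\mathcal H=\{\{x\in\mathbb{F}_q^3:u\cdot x=1\}:u\in\mathbb{F}_q^3\}$. I would first show that the VC dimension of $\mathcal H$ is exactly $3$: three linearly independent points $p_1,p_2,p_3$ are shattered because $a\mapsto(a\cdot p_1,a\cdot p_2,a\cdot p_3)$ is a bijection of $\mathbb{F}_q^3$, so every prescribed pattern of values is attainable; and no four points can be shattered, since writing a dependent point as $p_4=\alpha p_1+\beta p_2+\gamma p_3$ forces $a\cdot p_4=\alpha(a\cdot p_1)+\beta(a\cdot p_2)+\gamma(a\cdot p_3)$, whence the subsets $\{1,2,3\}$ and $\{1,2,3,4\}$ (both requiring $a\cdot p_1=a\cdot p_2=a\cdot p_3=1$) impose the contradictory demands $\alpha+\beta+\gamma\ne1$ and $\alpha+\beta+\gamma=1$. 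Since restricting a set system only decreases its VC dimension, the trace system on $U'$ also has VC dimension at most $3$, and the Sauer–Shelah lemma yields $N\le\sum_{i=0}^{3}\binom{|U'|}{i}\le|U'|^3$. Substituting into the display gives the number of coincident pairs at least $|U|^2/|U'|^3$, as claimed; here the hypothesis $|U'|^3\ll|U|^2$ is exactly what makes this quantity grow, while $|U|\gg q^2$ places us in the regime of the intended application.

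I expect the main obstacle to be the clean VC-dimension computation together with absorbing the Sauer–Shelah constant so that the bound reads $|U'|^3$ rather than $O(|U'|^3)$; fortunately the truncated sum $\sum_{i=0}^{3}\binom{n}{i}$ does satisfy $\le n^3$ once $n\ge2$, so no constant is lost provided $|U'|$ is not too small. A secondary point to verify is that degenerate configurations are harmless: the value $u=0$ (or any $u$ giving an empty or low-rank trace) only merges fibres of $\phi$ and can never increase $N$, so the pigeonhole bound is unaffected and the proof goes through unchanged.
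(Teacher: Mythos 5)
Your proof is correct and follows essentially the same route as the paper: bound the number of distinct traces of the planes on $U'$ by $|U'|^3$ via the VC-dimension-$3$ computation and Sauer--Shelah, then apply Cauchy--Schwarz over the fibres of the trace map. The only difference is that the paper first passes to a large subset $U_1\subset U$ on which $|\pi_u\cap U|=\Theta(|U|/q)$ (its Lemma \ref{good}, which is where the hypothesis $|U|\gg q^2$ is used); that reduction plays no role in the counting argument, so your direct version over all of $U$ is a harmless simplification.
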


\paragraph{Distances between two sets in $\mathbb{F}_q^3$:}
Given $E, F\subset \mathbb{F}_q^3$, the distance set between $E$ and $F$
is denoted by $\Delta(E, F)$, which is 
\[\Delta(E, F)=\{||x-y||=(x_1-y_1)^2+(x_2-y_2)^2+(x_3-y_3)^2\colon x\in E, y\in F\}\subset \mathbb{F}_q.\]
When the sizes of $E$ and $F$ are very different, the best current result over arbitrary finite fields is due to Koh and Sun in \cite{KS1}. In particular, they proved that \begin{equation}\label{KSdistances}
|\Delta(E, F)|\gg \begin{cases} \min\{q, |E||F|q^{-(d-1)}\} ~~&\mbox{if}~|E|<q^{\frac{d-1}{2}}\\
\min\{q, |F|q^{-\frac{d-1}{2}}\} ~~&\mbox{if}~q^{\frac{d-1}{2}}\le |E|\le q^{\frac{d+1}{2}}\\
\min\{q, |E||F|q^{-d}\} ~~&\mbox{if}~|E|\ge q^{\frac{d+1}{2}}
\end{cases}.\end{equation}
Note that in even dimensions, we need an additional condition that $|E||F|\gg q^d$.  

In the next theorem, we improve this result in certain ranges in $\mathbb{F}_q^3$.

\begin{theorem}\label{distance}
    Let $E$ and $F$ be sets in $\mathbb{F}_q^3$ with $q\equiv 3\mod 4$. Assume that the number of pairs of distance zero is at most $|E||F|/2$. Define $k$ to be the maximum integer such that there are $k$ collinear points in $F$ $\{u_1,\cdots , u_k\}$ with the property that there is a pair of points $(x,y)\in E\times E$ satisfying $||x-u_i|| = ||y-u_i||\ne 0$ for all $i$. Define $\alpha$ by $|F|>2kq^\alpha$. For $\alpha \in (0,1)$, we have 
    \[|\Delta(E, F)|\gg \begin{cases}
        \max\{k, ~q^\alpha\}~~~&\mbox{if}~|E|\ge q^{3\alpha}\\
        \max\left\lbrace k, \frac{|E|}{q^{2\alpha}} \right\rbrace~~~&\mbox{if}~|E|\le q^{3\alpha}
    \end{cases}\]
\end{theorem}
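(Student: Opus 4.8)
The plan is to reduce the distance problem to a point--plane incidence count in $\mathbb{F}_q^3$ and then feed it into \cref{lightLines}. For $u\in F$ and $t\in\mathbb{F}_q$ write $n_u(t)=\#\{x\in E:\|x-u\|=t\}$, and set
$Q:=\#\{(x,y,u)\in E\times E\times F:\|x-u\|=\|y-u\|\ne 0\}=\sum_{u\in F}\sum_{t\ne 0}n_u(t)^2$.
First I would observe that $\sum_{u}\sum_{t\ne 0}n_u(t)$ is exactly the number of pairs of $E\times F$ at nonzero distance, which by the hypothesis on distance-zero pairs is at least $|E||F|/2$. Since for each fixed $u$ the nonzero $t$ with $n_u(t)>0$ all lie in $\Delta(E,F)$, Cauchy--Schwarz gives $(|E||F|/2)^2\le |F|\,|\Delta(E,F)|\,Q$, hence $|\Delta(E,F)|\gg |E|^2|F|/Q$. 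It therefore suffices to prove $Q\ll |E||F|+|F||E|^2/q^{\alpha}+|E||F|q^{2\alpha}$.

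\textbf{Geometric interpretation.} The diagonal $x=y$ contributes at most $|E||F|$ to $Q$. For $x\ne y$ the identity $\|x-u\|=\|y-u\|\iff 2(y-x)\cdot u=\|y\|^2-\|x\|^2$ shows that the admissible $u$ are precisely the points of $F$ on the perpendicular-bisector plane $\pi_{x,y}$; when $\|x\|\ne\|y\|$ this plane normalizes to the form $a\cdot u=1$ required by \cref{lightLines}. The crucial idea is then to apply the incidence bound \emph{one $x$ at a time}. For fixed $x\in E$ put $\Pi^x=\{\pi_{x,y}:y\in E,\ y\ne x,\ \|y\|\ne\|x\|\}$, so that $\sum_{y}\#(F\cap\pi_{x,y})=I(F,\Pi^x)$ with $|\Pi^x|\le|E|$. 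To invoke \cref{lightLines} with $P=F$ I must verify its two hypotheses: the size condition $|F|\ge 2kq^{\alpha}$ is exactly the defining inequality for $\alpha$, and for the heavy-line condition I would note that if a line $\ell$ carried $k$ points of $F$ and lay in two planes $\pi_{x,y},\pi_{x,y'}$ of $\Pi^x$, then all these points $u_i$ satisfy $\|x-u_i\|=\|y-u_i\|$, which (away from the at most two points of $\ell$ with $\|x-u_i\|=0$) is precisely the configuration bounded by the definition of $k$. Thus \cref{lightLines} yields $I(F,\Pi^x)\ll |F||E|/q^{\alpha}+|F|q^{2\alpha}$, and summing over the $|E|$ choices of $x$ gives the off-diagonal bound on $Q$.

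Combining the two parts gives $|\Delta(E,F)|\gg |E|/(1+|E|/q^{\alpha}+q^{2\alpha})$, and a short case analysis at the threshold $|E|=q^{3\alpha}$ recovers the two branches: for $|E|\ge q^{3\alpha}$ the term $|E|/q^{\alpha}$ dominates the denominator and yields $q^{\alpha}$, while for $|E|\le q^{3\alpha}$ the term $q^{2\alpha}$ dominates and yields $|E|/q^{2\alpha}$. The $\max$ with $k$ I would obtain by a separate direct argument from the extremal configuration defining $k$: given $k$ collinear points $u_1,\dots,u_k$ and a pair $(x,y)$ with $\|x-u_i\|\ne 0$, the restriction of the quadratic $s\mapsto\|x-(u_0+sv)\|$ to the line takes each value at most twice, so the distances $\|x-u_i\|\in\Delta(E,F)$ realize $\gg k$ distinct values, giving $|\Delta(E,F)|\gg k$; this is where the degenerate isotropic-direction case must be excluded, for which the hypothesis $q\equiv 3\pmod 4$ is used.

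\textbf{Main obstacle.} I expect the delicate point to be the interface in the incidence step: reconciling the one-plane, nonzero-distance definition of $k$ with the two-planes hypothesis of \cref{lightLines}, and correctly absorbing the at-most-two distance-zero points per line into an additive constant. A second technical nuisance is the disposal of the origin planes arising when $\|x\|=\|y\|$, which fall outside the $a\cdot u=1$ normalization; these I would handle either by a direct crude estimate showing their total contribution is dominated by $|E||F|$, or by passing to a generic translate of the configuration. Controlling this residual uniformly, rather than only for small $E$, is the part of the argument most likely to require care.
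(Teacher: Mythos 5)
Your proposal follows essentially the same route as the paper's own proof: Cauchy--Schwarz reduces $|\Delta(E,F)|$ to the triple count $Q$ (the paper's $T(E,F)$), which is then bounded by summing point--plane incidences between $F$ and the perpendicular-bisector planes $\Pi^x$ via Theorem~\ref{lightLines}, with the $\max\{k,\cdot\}$ branch coming from the observation that a sphere of nonzero radius meets a line in at most two points when $q\equiv 3\bmod 4$. The technical points you flag as obstacles (the diagonal contribution, the planes through the origin when $||x||=||y||$ falling outside the $a\cdot u=1$ normalization, and matching the definition of $k$ to the two-plane hypothesis of Theorem~\ref{lightLines}) are genuine but are passed over silently in the paper's proof, so your treatment is, if anything, more careful.
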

We refer the reader to \cite{iosevich2007erdos,  murphy2022pinned, shparlinski2006set} for results when the sets $E$ and $F$ are the same. 

\paragraph{Dot product between two sets in $\mathbb{F}_q^3$:}
Given $E, F\subset \mathbb{F}_q^3$, the dot-product set between $E$ and $F$ is denoted by $D(E, F)$, which is 
\[D(E, F)=\{x\cdot y \colon x\in E, y\in F\}\subset \mathbb{F}_q.\]
Similarly, we have the following variant of Theorem \ref{distance} for the dot-product function.
\begin{theorem}\label{dot-product}
    Let $E$ and $F$ be sets in $\mathbb{F}_q^3$. Assume that the number of orthogonal pairs in $E\times F$ is at most $|E||F|/2$, and any two planes of the form $v\cdot x=\lambda$ and $w\cdot x=\lambda$, $v, w\in E$, $\lambda\ne 0$, have at most $k$ collinear points in common from $F$. Assume in addition that $|F|>2kq^{\alpha}$ and $E$ is not contained fully in any plane, then
    \[|D(E, F)|\gg \begin{cases}
        \max\{k, ~q^\alpha\}~~~&\mbox{if}~|E|\ge q^{3\alpha}\\
        \max\left\lbrace k, \frac{|E|}{q^{2\alpha}} \right\rbrace~~~&\mbox{if}~|E|\le q^{3\alpha}
    \end{cases}.\]
\end{theorem}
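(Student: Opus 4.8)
The plan is to reduce the dot-product statement to a point-plane incidence count in $\mathbb{F}_q^3$ and then invoke \Cref{lightLines}, following the same skeleton I would use to prove \Cref{distance}. First I would set up the counting framework: for each value $\lambda \in D(E,F)$ and each fixed $x \in E$, the equation $x \cdot y = \lambda$ describes a plane $\pi_x^\lambda$ in the $y$-variable, so that the number of pairs $(x,y) \in E \times F$ with $x \cdot y = \lambda$ equals the number of incidences between the point set $F$ and the plane set $\{\pi_x^\lambda : x \in E\}$. Summing the ``popularity'' of each value, I would write the total number of pairs as $\sum_{\lambda} \nu(\lambda)$ where $\nu(\lambda) = \#\{(x,y): x\cdot y = \lambda\}$, and isolate the contribution $\nu(0)$ of orthogonal pairs, which the hypothesis bounds by $|E||F|/2$. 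Hence $\sum_{\lambda \ne 0} \nu(\lambda) \ge |E||F|/2$, and a standard Cauchy--Schwarz step gives $|D(E,F)| \ge |D(E,F) \setminus \{0\}| \ge (\sum_{\lambda \ne 0}\nu(\lambda))^2 / \sum_{\lambda \ne 0}\nu(\lambda)^2$.

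The heart of the argument is then estimating the second moment $\sum_{\lambda \ne 0}\nu(\lambda)^2$, which counts quadruples $(x, y, x', y')$ with $x \cdot y = x' \cdot y' = \lambda \ne 0$; equivalently, fixing the common value $\lambda$, this is an incidence count that I would feed into \Cref{lightLines}. The key translation is to normalize the planes to the form $a \cdot z = 1$ required by the theorem: for $\lambda \ne 0$, the plane $x \cdot y = \lambda$ can be rescaled to $(x/\lambda)\cdot y = 1$, so the planes $\{v \cdot z = \lambda : v \in E\}$ become planes through points of the form $v/\lambda$, and the collinearity hypothesis in the theorem statement---that any two planes $v\cdot x = \lambda$, $w\cdot x = \lambda$ share at most $k$ collinear points of $F$---is exactly the geometric condition guaranteeing there is no line carrying $k$ points of $F$ lying in two of the relevant planes. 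Combined with $|F| > 2kq^\alpha$, this is precisely the hypothesis of \Cref{lightLines}, so I would apply it with the point set $F$ and the plane set arising from $E$ to obtain an incidence bound of shape $I \ll |F||E|/q^\alpha + |F|q^{2\alpha}$ (after summing appropriately over $\lambda$ and using that the number of relevant planes scales with $|E|$).

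With the incidence bound in hand, the final step is bookkeeping: I would substitute the upper bound for $\sum_{\lambda\ne 0}\nu(\lambda)^2$ into the Cauchy--Schwarz inequality, together with the lower bound $\sum_{\lambda \ne 0}\nu(\lambda) \ge |E||F|/2$, and simplify. The two terms in the incidence bound produce the two regimes in the conclusion: the main term $|E||F|/q^\alpha$ dominates when $|E|$ is large (giving $|D(E,F)| \gg q^\alpha$, up to the trivial lower bound $k$ that always holds because a single line can already force $k$ distinct values), while the term $|F|q^{2\alpha}$ dominates when $|E|$ is small (giving $|D(E,F)| \gg |E|/q^{2\alpha}$); the crossover at $|E| = q^{3\alpha}$ matches the case split in the statement. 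The hypothesis that $E$ is not contained in any plane is needed to ensure the planes $v\cdot z = \lambda$ are genuinely distinct as $v$ ranges over $E$, so that the incidence count is not degenerate.

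The main obstacle I expect is the normalization and the careful accounting of the parameter $\lambda$ in the second-moment count: one must verify that \Cref{lightLines} can be applied uniformly (or summed) across all nonzero $\lambda$ without the rescaling $v \mapsto v/\lambda$ collapsing distinct points of $E$ or violating the ``no $k$ collinear points in two planes'' condition. Establishing that the stated collinearity hypothesis on $E$ and $F$ translates cleanly into the line-avoidance hypothesis of \Cref{lightLines}---and checking the degenerate contributions (orthogonal pairs, and pairs where $x$ and $x'$ are parallel) do not spoil the bound---will require the most care; the rest is a standard Cauchy--Schwarz-plus-incidence argument.
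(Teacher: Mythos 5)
The incidence half of your proposal matches the paper's proof: for a fixed nonzero $\lambda$ one views $F$ as points and $\{v\cdot y=\lambda : v\in E\}$ as planes and applies Theorem~\ref{lightLines} to get $\nu(\lambda)\ll |E||F|/q^{\alpha}+|F|q^{2\alpha}$; whether you then run a second-moment Cauchy--Schwarz or, as the paper does, simply pigeonhole onto the most popular nonzero $\lambda$ (so that $|D(E,F)|\ge |E||F|/(2\nu(\lambda))$) is immaterial, and both give $|D(E,F)|\gg\min\{q^{\alpha},|E|/q^{2\alpha}\}$. Your normalization worry is also harmless: for fixed $\lambda\ne 0$ the map $v\mapsto v/\lambda$ is injective and preserves collinearity of the points of $F$, so the hypotheses of Theorem~\ref{lightLines} transfer verbatim.

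The genuine gap is the bound $|D(E,F)|\gg k$, which you wave off as ``the trivial lower bound $k$ that always holds because a single line can already force $k$ distinct values.'' It does not hold for free: if $\ell_0=\{p+td\colon t\in\mathbb{F}_q\}$ is a line carrying $k$ points $u_1,\dots,u_k$ of $F$ and $w\in E$, then $t\mapsto w\cdot(p+td)$ is affine in $t$, hence either injective on $\ell_0$ or \emph{constant} (the latter exactly when $w\cdot d=0$), and a priori every $w\in E$ could lie in the plane $\{x\colon x\cdot d=0\}$ and contribute a single value. This is precisely where the hypothesis that $E$ is not contained in a plane is needed --- not, as you assert, to make the planes $v\cdot y=\lambda$ distinct, which is automatic for $\lambda\ne 0$ since such a plane determines $v/\lambda$. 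The paper devotes a separate geometric argument to this step and explicitly flags it as the part that is harder than in the distance case: for each $\lambda\ne 0$ the planes $u_i\cdot x=\lambda$ meet in a common line $\ell_\lambda$, these lines are parallel as $\lambda$ varies, and either at least $k$ of them meet $E$ (so $k$ values are attained), or some $w\in E$ satisfies $w\cdot u_i=w\cdot u_j=\lambda\ne 0$ for $i\ne j$, which forces $\ell_0\subset\{w\cdot x=\lambda\}$ and lets one pick $w'\in E$ off that plane realizing $k$ distinct products against $\ell_0\cap F$. Without some version of this argument the $\max\{k,\cdot\}$ in the conclusion is unproven.
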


\subsection{Improved ranges of results}

\paragraph{Improved ranges of Theorem \ref{line-incidence}:} Theorem \ref{line-incidence} performs well when the sizes of the point and line sets are far apart. Relatively less is known about this case compared to the case where the number of points and lines are close together, see for example the discussion in \cite{overflow}. Theorem \ref{line-incidence} offers a better upper bound in its domain compared to those in (\ref{vinheq1}) and (\ref{C-S}) in the range
\begin{enumerate}
    \item If $|L||A|<q^{3\alpha}$, then Theorem \ref{line-incidence} gives $q^\alpha |L|^{1/2}|A|^{1/2}|B|^{1/2}$. This is better than (\ref{vinheq1}) and (\ref{C-S}) in the range
    \[\alpha<1/2, ~q^\alpha\max\{|A|, |L_x|\}<|L||A|<q^{3\alpha}, |L|>q^{2\alpha}, ~|A||B|>|L|q^{2\alpha}.\]
\item If $|L||A|>q^{3\alpha}$, then Theorem \ref{line-incidence} gives $|L||A||B|^{1/2}q^{-\alpha/2}$. This is better than (\ref{vinheq1}) and (\ref{C-S}) in the range
    \[q^\alpha\max\{|A|, |L_x|\}< |L||A|< q^{1+\alpha}, ~|A|<q^\alpha, ~|L|<q^{\alpha}|B|.\]
\end{enumerate}
In the following, we include two examples, one for $|A||L|\ll q^{3\alpha},$ and one for $|A||L|\gg q^{3\alpha}.$
\begin{itemize}
\item ($|A||L|\ll q^{3\alpha}$) Set $\alpha = \frac{1}{4},$ then let $L_x$ be a set of $q^\frac{1}{2}$ different slopes, and pick $q^\frac{1}{8}$ different lines of each slope for a total of $q^\frac{5}{8}$ lines. Then pick any $A,B \subset \mathbb F_q$ such that $|A|=q^\frac{1}{12}$ and $|B|=q^\frac{2}{3}.$
\item ($|A||L|\gg q^{3\alpha}$) Set $\alpha = \frac{2}{5},$ then let $L_x$ be a set of $q^\frac{4}{5}$ different slopes, and pick $q^\frac{1}{5}$ different lines of each slope for a total of $q$ lines. Then pick any $A,B \subset \mathbb F_q$ such that $|A|=q^\frac{4}{15}$ and $|B|=q^\frac{3}{4}.$
\end{itemize}


\paragraph{Improved ranges of Theorem \ref{plane-inci}:}
To see the domains in which the theorem is non-trivial, we recall the following well-known incidence bounds over arbitrary finite fields, namely, Vinh's result in 
\cite{vinh2011szemeredi}:
\begin{equation}\label{vinh}I(P, \Pi)\le \frac{|P||\Pi|}{q}+2q\sqrt{|P||\Pi|},\end{equation}
and the Cauchy-Schwarz incidence bounds:
\begin{equation}\label{C-S2}
    I(P, \Pi)\le q^{1/2}|P|^{1/2}|\Pi|+|P|, ~q^{1/2}|P||\Pi|^{1/2}+|\Pi|.
\end{equation}
\begin{enumerate}
    \item If $|P|\gg q^{3\alpha}$, then Theorem \ref{plane-inci} gives $|P||\Pi|q^{-\alpha}$. This is better than those in (\ref{vinh}) and (\ref{C-S2}) whenever 
    \[q^{3\alpha}<|P|<q^{1+2\alpha}, ~ q^{1+\alpha}<|\Pi|<q^{1+2\alpha}, ~|P||\Pi|<q^{2+2\alpha},~|P||\Pi|\ll q^4, ~\alpha\in (0, 1/2).\]
    \item If $|P|\ll q^{3\alpha}$, then then Theorem \ref{plane-inci} gives $q^{2\alpha}|\Pi|$. This is better than those in (\ref{vinh}) and (\ref{C-S2}) whenever 
    \[ q^{4\alpha-1}<|P|<q^{3\alpha},~~q^{1+\alpha}<|\Pi|<\min\{|P|^2q^{1-4\alpha}, |P|q^{2-4\alpha}\}, |P||\Pi|\ll q^4.\]
\end{enumerate}

To show that Theorem \ref{plane-inci} gives new bounds in non-trivial applications, we include three concrete examples, two for $|P|\ll q^{3\alpha},$ and one for $|P|\gg q^{3\alpha}.$
\begin{itemize}
\item ($|P|\ll q^{3\alpha}$ and $|P|<|\Pi|$) Set $\alpha = \frac{1}{5},$ and let $\Pi$ be a set of $q^\frac{5}{4}$ planes. Then pick any $P \subset \mathbb F_q^3$ such that $|P|=q.$
\item ($|P|\ll q^{3\alpha}$ and $|P|>|\Pi|$) Set $\alpha = \frac{1}{5},$ and let $\Pi$ be a set of $q^\frac{5}{4}$ planes. Then pick any $P \subset \mathbb F_q^3$ such that $|P|=q^\frac{4}{3}.$
\item ($|P|\gg q^{3\alpha}$) Set $\alpha = \frac{1}{3},$ and let $\Pi$ be a set of $q^\frac{3}{2}$ different planes. Then pick any $P \subset \mathbb F_q^3$ such that $|P|=q^\frac{11}{10}.$
\end{itemize}

\paragraph{Improved ranges of Theorem \ref{lightLines}:} To see that Theorem \ref{lightLines} also gives new bounds in non-trivial applications, we include two examples, one for $|\Pi|\ll q^{3\alpha},$ and one for $|\Pi|\gg q^{3\alpha}.$
\begin{itemize}
\item ($|\Pi|\ll q^{3\alpha}$) Set $\alpha = \frac{1}{3},$ and let $\Pi$ be a set of $q^\frac{8}{9}$ planes. Then pick any $P \subset \mathbb F_q^3$ such that $|P|=q^\frac{1}{2},$ taking care that no intersection of planes has more than $k=q^\frac{1}{8}$ points of $P$.
\item ($|\Pi|\gg q^{3\alpha}$) Again, set $\alpha = \frac{1}{3},$ and let $\Pi$ be a set of $q^\frac{5}{4}$ different planes. Then pick any $P \subset \mathbb F_q^3$ such that $|P|=2q^\frac{4}{3}.$ Because distinct planes can share no more than $q$ points, we simply let $k=q$ in this case.
\end{itemize}

\begin{remark}
    We note that over prime fields, a better improvement is due to Rudnev in \cite{rudnev2018number}, which states that 
    \[I(P, \Pi)\ll |P|^{1/2}|\Pi| + k|\Pi|,\]
    where    $k$ is the maximum number of collinear points in $P$, $|P|\le |\Pi|$ and $|P|=O( p^2)$ where $p$ is the characteristic of the field.
\end{remark}


\paragraph{Improved ranges of Theorem \ref{distance}:}
Compared to (\ref{KSdistances}), this result is most effective when the two sets have very different sizes.  
\begin{enumerate}
\item If $|E|\in (q^{2\alpha}, q^{3\alpha})$, $\alpha\in (0, 1/2)$, and $$|F|\in \left(q, ~\min \left\lbrace q^{2-2\alpha}, ~q\cdot \frac{|E|}{q^{2\alpha}} \right\rbrace\right)\cap (2kq^\alpha, q^2),$$
then (\ref{KSdistances}) is weaker than what is offered by Theorem \ref{distance}. More precisely, with $|F|\in (q, q^2)$, applying (\ref{KSdistances}) (and switching the roles of $E$ and $F$) gives us $|\Delta(E, F)|\gg q^{-1}|E|$, which is worse than $q^{-2\alpha}|E|$ when $\alpha\in (0, 1/2)$. If $|E|\in (q^{2\alpha}, q^{3\alpha})\cap (0, q)$, then we have $|\Delta(E, F)|\gg q^{-2}|E||F|$ from (\ref{KSdistances}), which is smaller than $q^{-2\alpha}|E|$ when $|F|<q^{2-2\alpha}$. If $|E|\in (q^{2\alpha}, q^{3\alpha})\cap (q, q^2)$, then we have $|\Delta(E, F)|\gg q^{-1}|F|$, which is also weaker than $q^{-2\alpha}|E|$ when $|F|<q^{1-2\alpha}|E|$. 
\item Similarly, if $|E|\in (q^{2\alpha}, q^{3\alpha})$ and $|F|\in \left(2kq^\alpha, \min \left\lbrace q^{2-2\alpha}, q \right\rbrace\right)$,  then the bound $q^{-2\alpha}|E|$ by Theorem \ref{distance} is also better. If $|E|\leq q$ then \eqref{KSdistances} gives $|E||F|q^{-2}$  and if $|E|\geq q$ \eqref{KSdistances} gives $|F|q^{-1}$; both of these are smaller than $|E|q^{-2\alpha}$ in the range given.
\end{enumerate}
\subsection{Main ideas}
The idea of using the VC-dimension to study incidence questions is not new in the literature, for instance, Fox, Pach, Sheffer, Suk, and Zahl in \cite{fox2017semi} proved the following theorem. 
\begin{theorem}\label{thm1.8}
Let $G=(P, Q, E)$ be a bipartite graph with $|P|=m$ and $|Q|=n$ such that the system $\mathcal{F}_1:=\{N(q)\colon q\in Q\}$ satisfies $\pi_{\mathcal{F}_1}(z)\le cz^d$ for all $z$. Then, if $G$ is $K_{k, k}$-free, then 
\[|E(G)|\le c_1(mn^{1-1/d}+n),\]
where $c_1=c_1(c, d, k)$, and $N(q)$ is the set of neighbors of $q$ in $P$.
\end{theorem}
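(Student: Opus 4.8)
The plan is to set aside any geometry and prove the statement as a pure consequence of the bounded primal shatter function together with Haussler's packing lemma. Write $d_q=|N(q)|$ for the degree of $q\in Q$, so that $|E(G)|=\sum_{q\in Q}d_q$, and regard $\mathcal F_1=\{N(q):q\in Q\}$ as a set system on the ground set $P$ of size $m$ whose primal shatter function obeys $\pi_{\mathcal F_1}(z)\le cz^d$. The $K_{k,k}$-free hypothesis becomes the statement that any $k$ members of $\mathcal F_1$ have fewer than $k$ common elements, i.e. $|N(q_1)\cap\cdots\cap N(q_k)|<k$ for distinct $q_1,\dots,q_k$. The entire theorem will follow once I establish the single packing estimate
\[ N_{\ge s}:=\bigl|\{q\in Q: d_q\ge s\}\bigr|\ll_{c,d,k}\Bigl(\tfrac{m}{s}\Bigr)^{d}\qquad(s\ge 2k), \]
since $|E(G)|$ can then be recovered from the tail counts $N_{\ge s}$ by a routine layer-cake summation.

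The heart of the matter is this packing estimate, and it is where Haussler's packing lemma (in its shatter-function form) enters: a subfamily of $\mathcal F_1$ whose members are pairwise $\delta$-separated in the symmetric-difference metric — every two differ in more than $\delta$ points of $P$ — has size at most $C(c,d)(m/\delta)^d$. I would fix $s\ge 2k$, set $\delta=s/(2k)$, and let $\mathcal G\subseteq\{q\in Q:d_q\ge s\}$ be a maximal subset whose neighborhoods are pairwise $\delta$-separated, so Haussler gives $|\mathcal G|\le C(2km/s)^d$. It remains to show each $g\in\mathcal G$ can be $\delta$-close to only few large-degree vertices. If $q_1,\dots,q_k$ all satisfy $|N(q_i)\,\triangle\,N(g)|\le\delta$, then each $N(q_i)$ omits at most $\delta$ points of $N(g)$, whence $\bigl|\bigcap_i N(q_i)\bigr|\ge |N(g)|-k\delta\ge s-k\delta=s/2\ge k$; these $\ge k$ common neighbors together with $q_1,\dots,q_k$ form a forbidden $K_{k,k}$. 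Hence at most $k-1$ large-degree vertices lie within $\delta$ of any fixed $g$, and by maximality of $\mathcal G$ every large-degree vertex lies within $\delta$ of some member of $\mathcal G$. Therefore $N_{\ge s}\le k\,|\mathcal G|\ll_{c,d,k}(m/s)^{d}$, as claimed.

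With the packing estimate in hand I would conclude by a layer-cake summation: the vertices of degree below $2k$ contribute at most $2kn$ edges, and for the rest $\sum_{q:\,d_q\ge 2k}d_q\le 2k\,N_{\ge 2k}+\int_{2k}^{m}N_{\ge s}\,ds$. Inserting $N_{\ge s}\le\min\{n,\ C(m/s)^d\}$ and splitting the integral at the crossover $s_0\asymp mn^{-1/d}$ where $n=C(m/s)^d$, both pieces evaluate to $O_{c,d,k}(mn^{1-1/d})$, while the low-degree and $N_{\ge 2k}$ terms are $O_k(n)$, giving $|E(G)|\le c_1(mn^{1-1/d}+n)$ with $c_1=c_1(c,d,k)$. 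The one genuinely delicate point — the main obstacle — is the packing step: one must invoke Haussler's lemma in exactly its shatter-function form (passing instead through a crude VC-dimension bound would inject logarithmic factors and spoil the clean exponent $1-1/d$), and the scale $\delta\asymp s/k$ must be chosen so that it simultaneously keeps the separated family small via Haussler and keeps each $\delta$-cluster below the $K_{k,k}$ threshold. Once that balance is struck, the summation and the disposal of low-degree vertices are entirely routine.
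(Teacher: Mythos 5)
Your proof is correct, but it takes a genuinely different route from the one the paper sketches for Theorem \ref{thm1.8}. The paper (following Fox, Pach, Sheffer, Suk, and Zahl) works with the \emph{dual} family $\mathcal{F}_2=\{N(u)\colon u\in P\}$ on the ground set $Q$: one finds $k$ vertices $v_1,\dots,v_k\in Q$ crossed by at most $O(m/n^{1/d})$ sets of $\mathcal{F}_2$, deduces from $K_{k,k}$-freeness that one of these vertices has degree at most $O(m/n^{1/d})+(k-1)$, deletes it, and iterates, summing the degree bounds over the shrinking vertex set. You instead stay entirely on the primal side: a dyadic decomposition by degree, Haussler's packing lemma at scale $\delta\asymp s/k$ applied to a maximal $\delta$-separated subfamily of the rich neighborhoods, and the $K_{k,k}$-free condition to show each $\delta$-ball captures fewer than $k$ rich vertices, giving the tail bound $N_{\ge s}\ll_{c,d,k}(m/s)^d$ in one shot before a routine layer-cake summation. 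Both arguments are sound. Yours avoids the crossing lemma and the delete-and-iterate bookkeeping in which the parameter $n$ changes at every step, and it is in fact the same strategy the paper itself uses to prove Theorems \ref{plane-inci} and \ref{lightLines}; the only structural difference is that there a \emph{pairwise} intersection bound (two planes share at most $q$, or at most $k$ collinear, points) makes every pair of rich neighborhoods automatically well separated, whereas in the abstract $K_{k,k}$-free setting only $k$-wise intersections are controlled, which is exactly why you need the extra maximal-net-plus-clustering step --- a step you execute correctly, with the scale $\delta=s/(2k)$ chosen so that $s-k\delta\ge k$. The dual crossing argument buys generality in the original source (it feeds into their polynomial-partitioning refinements for semi-algebraic graphs), but for the statement as given your route is the more economical one. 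The single shared caveat is that the final summation (your integral $\int(m/s)^d\,ds$, their sum $\sum_j m/j^{1/d}$) yields the clean term $mn^{1-1/d}$ only when $d>1$; for $d=1$ a logarithm appears, so the theorem should be read with $d>1$, as in the original reference.
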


We first want to compare between the proof of this theorem and our proofs of the above incidence bounds. The strategy in the proof of Theorem \ref{thm1.8} is to do a number of reductions in which they bounded the number of neighbors of a single vertex from $Q$ in each step. More precisely, let $\mathcal{F}_1$ as above, and let $\mathcal{F}_2=\{N(u)\colon u\in P\}$. Given a set $B\in \mathcal{F}_2$ and $k$ points $\{v_1, \ldots, v_k\}\subset Q$. We say that $B$ crosses $\{v_1, \ldots, v_k\}$ if $\{v_1, \ldots, v_k\}\not\subset B$ and $B\cap \{v_1, \ldots, v_k\}\ne \emptyset$. It has been proved in \cite{fox2017semi} that with such families $\mathcal{F}_1$ and $\mathcal{F}_2$, there exist $k$ points $v_1, \ldots, v_k$ in $Q$ such that at most $2c'm/n^{1/d}$ sets from $\mathcal{F}_2$ cross $\{v_1, \ldots, v_k\}$, where $c'=c'(c, d, k)$. For such a $k$-tuples, we observe that the neighborhood of $q_1$ contains at most $c''m/n^{1/d}+(k-1)$ points with $c''=c''(c', k)$. This can be explained from the fact that there are at most $k-1$ points $u_i$ such that $N(u_i)$ covers $\{v_1, \ldots, v_k\}$ (the graph is $K_{k, k}$-free), and the number of sets crossing $\{v_1, \ldots, v_k\}$ is at most $2c'm/n^{1/d}$. Notice that there is a possibility that there are vertices $u_1$ and $u_2$ in $P$ such that $N(u_1)=N(u_2)$ in $\mathcal{F}_2$, which can be ruled out if we assume that each set in $\mathcal{F}_2$ contains at least $k$ points, then it is clear that $c''\le kc'$. After counting the neighbors of $v_1$, we remove it and repeat the argument for the rest until we have less than $k$ points left.

In the finite fields, the above theorem is not practical at least in two and three dimensions. For instance, in two dimensions, if we have a set of points and a set of lines, it is clear that the graph is $K_{2, 2}$-free and $d=2$, thus, it implies nothing but the Cauchy-Schwarz bound. In three dimensions for points and planes, we know that the graph might contain a very large subgraph, say  $K_{q, q}$. Thus, the factor $c_1$ might be very large, which makes the bound worse. Theorem \ref{thm1.8} was improved in \cite{JP} but the limitations when applying to these incidence problems remain the same. 

As mentioned above, to prove Theorem \ref{line-incidence}, we first prove Theorem \ref{lightLines} for points and planes in $\mathbb{F}_q^3$ and then use the fact that if the point set is of Cartesian product structure, then point-line incidences can be reduced to point-plane incidences. To prove Theorem \ref{lightLines}, we first define the graph $G$ with $V(G)=\mathbb{F}_q^3$, and two vertices are connected by an edge if the product between them is $1$. It is not hard to check that the VC-dimension of this graph is at most $3$. Then Lemma \ref{packing} can be rewritten in the way that the number of rich planes, i.e. planes with at least a certain number of points, is bounded. With this information, our theorems follow. 
\section{Preliminaries}
In this section, we recall results related to the VC-dimension which can be found in \cite{fox2017semi}. Let $U$ be a set and $\mathcal{F}$ and family of subsets of $U$. We say that a subset $S\subset U$ is {\em shattered} by $\mathcal{F}$ if for any subset $S'\subset S$, there is some $A\in \mathcal{F}$ such that $A\cap S = S'$. The {\em VC-dimension} of $(U, \mathcal{F})$ is the largest integer $d$ such that there is a $d$-element subset of $S$ which is shattered by $\mathcal{F}$. We also recall the definition of the {\em primal shatter function} of $(U, \mathcal{F})$ which is
\[\pi_{\mathcal{F}}(z)=\max_{U'\subset U, ~|U'|=z}\#\{A\cap U'\colon A\in \mathcal{F}\}.\]
Sauer, Shelah, and others (see \cite{Sauer}, for example) proved that if $\mathcal{F}$ is a set system with VC-dimension $d$, then 
\begin{equation}\label{SS}\pi_{\mathcal{F}}(z)\le \sum_{i=1}^d\binom{z}{i}.\end{equation}
To introduce the next concept, we will need to measure the size of the symmetric difference between two sets, $F_1, F_2\in \mathcal{F}$. We write $|F_1\triangle F_2|$, to mean the size of their symmetric difference, which is given by
\[F_1\triangle F_2=(F_1\cup F_2)\setminus (F_1\cap F_2).\]
A system of sets $\mathcal{F}$ is said to be $(k, \delta)$-separated if for any $k$ sets $F_1, \ldots, F_k\in \mathcal{F}$, we have 
\[|(F_1\cup F_2\cup\cdots\cup F_k)\setminus (F_1\cap F_2\cap \cdots\cap F_k)|\ge \delta.\]
The result we use was called Lemma 2.5 in \cite{fox2017semi} and is known as the packing lemma.
\begin{lemma}\label{packing}
Let $\mathcal{F}$ be a $(k, \delta)$-separated system and $\pi_{\mathcal{F}}(z)\le cz^d$ for all $z$. Then we have $|\mathcal{F}|\le c'(|U|/\delta)^d$ where $c'=c'(k, d, c)$.
\end{lemma}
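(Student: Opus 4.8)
The plan is to prove the bound by the probabilistic method, taking a random subsample of the ground set $U$ and feeding it into the Sauer--Shelah estimate \eqref{SS} (equivalently the hypothesis $\pi_{\mathcal{F}}(z)\le cz^d$). Write $n=|U|$ and $m=|\mathcal{F}|$. The guiding idea is that restricting the sets of $\mathcal{F}$ to a set of size $z$ produces at most $\pi_{\mathcal{F}}(z)\le cz^d$ distinct traces, so if we can exhibit a \emph{small} $R\subseteq U$ on which few members of $\mathcal{F}$ have the same trace, we get an upper bound on $m$ directly. Throughout I may assume $\delta\le n$ (otherwise the separation hypothesis is vacuous) and that $n/\delta$ exceeds a constant depending on $k,d$, since the baseline $m\le\pi_{\mathcal{F}}(n)\le cn^d$ already disposes of the complementary range.

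First I would take $R\subseteq U$ uniformly at random of size $r$, and for each trace $t\subseteq R$ form the fiber $\mathcal{F}_t=\{F\in\mathcal{F}:F\cap R=t\}$. The fibers partition $\mathcal{F}$, and the number of nonempty fibers is the number of distinct traces, hence at most $\pi_{\mathcal{F}}(r)\le cr^d$. The decisive observation, which is exactly where the hypothesis is used, converts ``large fiber'' into ``$R$ misses a large spread'': if a fiber contains $k$ sets $F_1,\dots,F_k$, then these all agree on $R$, so $R$ is disjoint from $(F_1\cup\cdots\cup F_k)\setminus(F_1\cap\cdots\cap F_k)$, a set of size at least $\delta$ by $(k,\delta)$-separation. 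Thus any element lying in a fiber of size $\ge k$ witnesses that $R$ avoided a $\delta$-element spread, an event of probability roughly $(1-\delta/n)^r\approx e^{-r\delta/n}$. Choosing $r=\Theta(n/\delta)$ keeps the number of traces at most $cr^d=\Theta((n/\delta)^d)$ while making every fixed spread hit with constant probability; bounding $m$ by $(k-1)$ times the number of fibers on a ``good'' sample would then give the theorem, with the factor $k-1$ accounting for fibers of size $\le k-1$.

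The main obstacle is that one cannot union-bound over all $k$-tuples of $\mathcal{F}$ to force the random $R$ to hit every relevant spread: there are about $\binom{m}{k}$ such tuples, and a crude union bound demands $r\gtrsim (n/\delta)\log m$, yielding only the weaker estimate $m\ll (n/\delta)^d(\log(n/\delta))^d$ with a spurious logarithmic factor. Removing this factor is precisely the content of Haussler's packing argument, and this is the step I expect to be delicate. The remedy is to count ``short edges'' rather than tuples: one passes to the one-inclusion graph on the realized traces, invokes the fact that for a system of VC-dimension $d$ this graph has at most $d$ times as many edges as vertices (average degree $\le 2d$), and runs a random-restriction argument that turns the metric separation on $U$ into a controlled number of short edges on a sample of size $\Theta(n/\delta)$. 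Comparing the linear-in-$m$ edge bound coming from the VC-dimension against the edge count forced by $\delta$-separation yields $m\le c'(n/\delta)^d$, with the dependence $c'=c'(k,d,c)$ absorbing $c$, the degree exponent $d$, and the maximal fiber size $k-1$.
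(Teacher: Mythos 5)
The paper does not actually prove this lemma: it is quoted verbatim as Lemma~2.5 of Fox--Pach--Sheffer--Suk--Zahl, which in turn rests on Haussler's packing lemma, so there is no in-paper argument to measure you against. Judged as a self-contained proof attempt, your outline follows the standard route and your first paragraph is sound: the fibers $\mathcal{F}_t$ partition $\mathcal{F}$ into at most $\pi_{\mathcal{F}}(r)\le cr^d$ classes, and your observation that $k$ sets sharing a trace on $R$ force $R$ to miss a $\delta$-element spread is exactly the right way to feed the $(k,\delta)$-separation into a sampling argument. Your diagnosis that a union bound over $k$-tuples costs a factor $(\log(n/\delta))^d$ is also correct.

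The genuine gap is that the step which removes that logarithm --- the entire content of the lemma --- is invoked by name rather than carried out. ``Passes to the one-inclusion graph \ldots and runs a random-restriction argument'' is a pointer to Haussler's proof, not a proof; the double count it refers to (sample $A$ of size $s=\Theta(dn/\delta)$ plus one extra random point, lower-bound the expected number of unit edges by roughly $m\cdot s\delta/n$ using separation, upper-bound it by $d$ times the expected number of distinct traces, which is at most $c\,s^d$ by hypothesis, and compare) is where all the work lies and is delicate to get right, especially the adaptation from Haussler's pairwise ($k=2$) separation to $(k,\delta)$-separation, which requires arguing about fibers of size up to $k-1$ throughout. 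There is also a smaller unaddressed point: the edge-density bound $|E|\le d|V|$ for the one-inclusion graph is a statement about VC-dimension $d$, whereas your hypothesis is the shatter-function bound $\pi_{\mathcal{F}}(z)\le cz^d$; the VC-dimension of such a system is only bounded by some $d'=d'(c,d)\ge d$, and one must check that running the argument with $d'$ in the density bound but $d$ in the trace count still yields the exponent $d$ in the conclusion. None of this makes your approach wrong --- it is the correct and essentially the only known approach --- but as written the proposal defers the theorem to itself. Given that the paper simply cites the result, the pragmatic fix is to do the same; otherwise the edge-counting computation must be supplied in full.
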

\section{Proof of Theorem \ref{plane-inci}}
Recall that by assumption, $|\Pi|\geq 2q^{1+\alpha}.$ Now set $U=P$ and $V=\{a\colon \{a\cdot x=1\}\in \Pi\}$. For each $u\in U$, let $N(u)$ be the set of $v\in V$ such that $u\cdot v=1$. 

Then we can write $I(P, \Pi)$ as 
\begin{align*}
     \sum_{u\in U} |N(u)|=&I(P, \Pi)=\sum_{u\colon |N(u)|\le 2|V|q^{-\alpha}}|N(u)|+\sum_{u\colon |N(u)|\ge 2q^{-\alpha}|V|}|N(u)|\\
    &\le \frac{2|U||V|}{q^{\alpha}}+\sum_{i}\sum_{2^i|V|q^{-\alpha}\le |N(u)|<2^{i+1}|V|q^{-\alpha}}|N(u)|.\\
\end{align*}
We now show that the number of $u\in U$ such that $|N(u)|\ge 2^i|V|q^{-\alpha}$ is at most $(q^{\alpha}2^{-i})^3$.
Indeed, let $X_\alpha(U)$ be the set of such $u$. Let $\mathcal{G}$ be the set system {(with ground set $V$)} defined by $\mathcal{G}:=\{N(u)\colon u\in X_\alpha(U)\}$. {Since two planes in $\mathbb{F}_q^3$ either do not intersect or intersect in a line, and there are at most $q$ points on a line}, this implies 
\[N(u_1)\cap N(u_2)\le q, ~\forall~u_1, u_2\in U.\]
Now, combining this with the fact that $|V|>2q^{1+\alpha},$ we see
\[|N(u_1)\triangle N(u_2)|\ge \frac{2^i|V|}{q^\alpha}-q\ge \left(2^i-\frac{1}{2}\right)\frac{|V|}{q^\alpha}.\]
By a direct computation, it is not hard to see that the VC-dimension of $\mathcal{G}$ is at most $3$: if there are $4$ planes which contain a point, then any point in $3$ of them must be in all $4$. Therefore, the inequality (\ref{SS}) holds with $d=3$. Thus, we now apply Lemma \ref{packing} with $\delta = c2^i|V|q^{-\alpha},$ for some constant $c$, to conclude that 
\[|X_\alpha(U)|\ll \left(\frac{|V|}{\delta} \right)^3\ll\left(q^\alpha 2^{-i}\right)^3.\]
So,
\[I(P, \Pi)\ll \frac{|U||V|}{q^{\alpha}}+\sum_{i}|V|q^{2\alpha}\frac{1}{2^{2i}}\leq \frac{|U||V|}{q^{\alpha}}+|V|q^{2\alpha}.\]

By symmetry, we also have 
\[I(P, \Pi)\ll  \frac{|U||V|}{q^{\alpha}}+|U|q^{2\alpha}.\]

\section{Proof of Theorem \ref{lightLines}}

This proof is very similar to that of Theorem \ref{plane-inci}. Set $U=P$ and $V=\{a\colon \{a\cdot x=1\}\in \Pi\}$. For each $v\in V$, let $N(v)$ be the set of $u\in U$ such that $u\cdot v=1$.

Then we can write $I(P, \Pi)$ as 
\begin{align*}
   \sum_{v\in V} |N(v)| &=I(P, \Pi)=\sum_{v\colon |N(v)|\le 2|U|q^{-\alpha}}|N(v)|+\sum_{v\colon |N(v)|\ge 2q^{-\alpha}|U|}|N(v)|\\
    &\le \frac{2|U||V|}{q^{\alpha}}+\sum_{i}\sum_{2^i|U|q^{-\alpha}\le |N(v)|<2^{i+1}|U|q^{-\alpha}}|N(v)|.\\
\end{align*}
As before, we show that the number of $v\in V$ such that $|N(v)|\ge 2^i|U|q^{-\alpha}$ is at most $(q^{\alpha}2^{-i})^3$.
Indeed, let $X_\alpha(V)$ be the set of such $v$. Let $\mathcal{G}$ be the set system defined by $\mathcal{G}:=\{N(v)\colon v\in X_\alpha(V)\}$. Since two planes in $V$ either do not intersect or intersect in a line, and there are at most $k$ collinear points in $U$, this implies 
\[|N(v_1)\cap N(v_2)|\le k, ~\forall~v_1, v_2\in V.\]
Thus, 
\[|N(v_1)\triangle N(v_2)|\ge \frac{2^i|U|}{q^\alpha}-k\ge \left(2^i-\frac{1}{2}\right)\frac{|U|}{q^\alpha},\]
since $|U|>2kq^\alpha$. Again, by a direct computation, it is not hard to see that the VC-dimension of $\mathcal{G}$ is at most $3$: to shatter a set of $4$ points they would need to lie in a common plane and in this case any plane containing $3$ of them must contain all $4$. Therefore, the inequality (\ref{SS}) holds with $d=3$. Thus, we now apply Lemma \ref{packing} to conclude that 
\[|X_\alpha(V)|\ll \left(q^\alpha 2^{-i}\right)^3.\]
So,
\[I(P, \Pi)\ll \frac{|U||V|}{q^{\alpha}}+\sum_{i}|U|q^{2\alpha}\frac{1}{2^{2i}}\leq \frac{|U||V|}{q^{\alpha}}+|U|q^{2\alpha}.\]
The theorem follows.
\section{Proof of Theorem \ref{line-incidence}}
    Without loss of generality, we assume that the lines are defined by the equation $y=ax+b$. Since other lines will contribute at most a factor of $2|P|$ incidences.

    
    By the Cauchy-Schwarz inequality, one has 
    \[I(A\times B, L)\le |B|^{1/2}\left(\#\{(a, b, x, a', b', x')\in L\times A\times L\times A\colon ax+b=a'x'+b'\}\right)^{1/2}.\]
Note that this Cauchy-Schwarz step has also been used in the proof of Stevens-De Zeeuw point-line incidence result in \cite{ffrank}.

The equation $ax+b=a'x'+b'$ can be viewed as an incidence between the point $(x, a', b')$ and the plane defined by $aX-x'Y+Z=-b$ in $\mathbb{F}_q^3$. Let $P$ and $\Pi$ be the sets of corresponding points and planes in $\mathbb{F}_q^3$. We note that any plane in $\Pi$ does not contain any vertical line, so the number of collinear points $k$ in $P$ can be bounded by $k=\max\{|A|, |L_x|\}$ when we do the projection to the $Oxy$ plane. Note that $|P|=|\Pi|=|L||A|$. Thus, if $|L||A|\ge 2kq^\alpha$, then by Theorem \ref{lightLines}, one has 
\[\#\{(a, b, x, a', b', x')\in L\times A\times L\times A\colon ax+b=a'x'+b'\}\lesssim \frac{|L|^2|A|^2}{q^\alpha}+|L||A|q^{2\alpha}.\]
This completes the proof.

\section{Proof of Theorem \ref{thm1.9}}
To prove Theorem \ref{thm1.9}, we need the following lemma. Its proof is standard, but we include it for completeness. 
\begin{lemma}\label{good}
Let $U$ be a set in $\mathbb{F}_q^3$ with $ |U|\geq 8q^2$. Then there exists a subset $U_1$ with $|U_1| \gg |U|$ and for any $u\in U_1$, the number of $u'\in U$ such that $u\cdot u'=1$ is $\Theta\left( \frac{|U|}{q}\right)$. 
\end{lemma}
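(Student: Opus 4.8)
The plan is to prove Lemma~\ref{good} by a standard deletion (or popularity) argument, showing that only few points of $U$ can have an anomalous degree in the ``dot-product-one'' graph, so that discarding them leaves a large subset $U_1$ on which every point has degree of the correct order $\Theta(|U|/q)$. First I would set up the bipartite (in fact, symmetric) incidence structure: for each $u\in U$ let $d(u)=\#\{u'\in U\colon u\cdot u'=1\}$. The heuristic is that the plane $\{x\colon u\cdot x=1\}$ meets a generic set of size $|U|$ in about $|U|/q$ points, so I expect $\sum_{u\in U}d(u)\approx |U|^2/q$, and I want to trim away the points where $d(u)$ is much larger or much smaller than this average.

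The key steps, in order, are as follows. First I would estimate the total number of edges $\sum_{u}d(u)=\#\{(u,u')\in U^2\colon u\cdot u'=1\}$ via Vinh-type counting in $\mathbb{F}_q^3$: the number of incidences between the point set $U$ and the plane set $\{u\cdot x=1\colon u\in U\}$ is $|U|^2/q+O(q\,|U|)$ by the point-plane analogue of \eqref{vinh}, and since $|U|\ge 8q^2$ the main term dominates, giving $\sum_u d(u)=(1+o(1))|U|^2/q$. Next, to control points of excessively high degree I would bound the second moment $\sum_u d(u)^2=\#\{(u,u',u'')\colon u\cdot u'=u\cdot u''=1\}$, again by the same spectral/incidence estimate applied to the pairs $(u',u'')$: two planes $u'\cdot x=1$ and $u''\cdot x=1$ meet in a line, which carries at most $q$ points of $U$, so the ``diagonal'' and ``off-diagonal'' contributions are both $O(|U|^3/q^2)$ once $|U|\gg q^2$. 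This shows $\sum_u d(u)^2\ll |U|^3/q^2$, whence by Cauchy--Schwarz (or Chebyshev) only a small fraction of points can have $d(u)\gg |U|/q$. Finally, to remove the low-degree points I would argue that the number of $u$ with $d(u)\le \tfrac12|U|/q$ contributes at most $\tfrac12|U|^2/q$ edges, which is only half of the total, so these points form a strict minority; deleting both the high- and low-degree sets leaves a set $U_1$ with $|U_1|\gg|U|$ on which $d(u)=\Theta(|U|/q)$.

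The main obstacle I anticipate is the lower-degree trimming rather than the upper one: a first- and second-moment argument cleanly bounds the number of high-degree points, but guaranteeing that most points have $d(u)$ bounded \emph{below} by a constant multiple of $|U|/q$ requires ruling out a large cluster of near-isolated points. I would handle this by observing that the few high-degree points already identified can absorb at most $o(|U|^2/q)$ of the edge count, so the edges incident to the remaining points still total $(1+o(1))|U|^2/q$; an averaging over this restricted set, combined with the second-moment bound restricted to it, forces all but $o(|U|)$ of them into the window $[\tfrac12|U|/q,\ C|U|/q]$. The condition $|U|\ge 8q^2$ is exactly what makes the error terms $O(q|U|)$ and $O(|U|^3/q^2)$ genuinely lower order than the main terms, so the explicit constant $8$ should emerge from tracking these comparisons. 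The only genuinely delicate point is quantifying ``how many'' low-degree vertices there can be, and I expect the stated conclusion $|U_1|\gg|U|$ to follow once the edge-count deficit attributable to low-degree vertices is shown to be a bounded fraction of the total.
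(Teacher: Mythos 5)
Your overall plan---trim the vertices of anomalously high and low degree and keep the rest---is the same as the paper's, but two of your steps do not go through as stated. First, the second-moment bound. You claim $\sum_u d(u)^2=O(|U|^3/q^2)$ on the grounds that two distinct planes meet in a line carrying at most $q$ points of $U$. That reasoning only yields $\sum_u d(u)^2\le \sum_u d(u)+|U|^2\cdot q=O(|U|^2q)$, and $|U|^2q\le |U|^3/q^2$ only when $|U|\ge q^3$. The lemma must hold for $|U|$ as small as $8q^2$, where $|U|^2q$ exceeds $|U|^3/q^2$ by a factor of up to $q$. To get the second moment (really the variance $\sum_u(d(u)-|U|/q)^2$) down to $o(|U|^3/q^2)$ you need the cancellation coming from the spectral estimate behind \eqref{vinh}, not the crude ``$\le q$ points per line'' count---at which point you are re-deriving the two-sided Vinh bound rather than avoiding it. Second, the low-degree trimming as written is a non-sequitur: from ``the vertices with $d(u)\le\frac12|U|/q$ contribute at most $\frac12|U|^2/q$ edges'' you cannot conclude they ``form a strict minority''---a set of low-degree vertices contributes few edges no matter how large it is, and could a priori be almost all of $U$. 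Your proposed repair (average over $U\setminus L$ and use the degree cap $2|U|/q$ there) can be made to work, but only after you additionally show that the edges incident to the high-degree set $L$ are $o(|U|^2/q)$, which again is not established by what precedes it.

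The paper's proof sidesteps both issues by applying the two-sided point--plane incidence estimate directly to the offending sets. Writing $L=\{u:|N(u)|\ge 2|U|/q\}$ and $R=\{u:|N(u)|\le |U|/(2q)\}$ and viewing these points as planes $x\cdot u=1$, the degree conditions force $I(U,L)\ge 2|U||L|/q$ and $I(R,U)\le |R||U|/(2q)$; comparing against the upper bound in \eqref{vinh} for $L$ and against the matching \emph{lower} bound $|R||U|/q-2q\sqrt{|R||U|}\le I(R,U)$ for $R$ gives $|U||L|\le 4q^4$ and $|U||R|\le 16q^4$, hence $|L|\le |U|/16$ and $|R|\le |U|/4$ once $|U|\ge 8q^2$. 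The key ingredient you are missing is precisely this lower bound on incidences for an \emph{arbitrary} subset, which is what rules out a large cluster of near-isolated points; no first- or second-moment computation of the kind you describe substitutes for it. I recommend restructuring your argument around the two-sided version of \eqref{vinh} applied to $L$ and $R$ separately.
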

\begin{proof}
For $u\in U$, let $N(u)$ be the set of $u'\in U$ such that $u\cdot u'=1$.
Define 
\[L:=\left\lbrace u\colon |N(u)|\ge \frac{2|U|}{q}\right\rbrace, ~R:=\left\lbrace u\colon |N(u)|\le \frac{|U|}{2q}\right\rbrace.\]
We identify each point $u\in L$ with the plane defined by $x\cdot u=1$. We abuse the notation to denote the set of corresponding planes by $L$. Then it is clear that $I(U, L)\ge 2|U|^2/q$. Using the point-plane incidence bound (\ref{vinh}), we have 
\[\frac{2|U||L|}{q}\le I(U, L)\le \frac{|U||L|}{q}+2q\sqrt{|U||L|}.\]
This implies $|U||L|\le 4q^4$. Thus, if $|U|\ge 8q^2$, then $|L|\le q^2/2\leq |U|/16$. 

Similarly, we have 
\[\frac{|R||U|}{q}-2q\sqrt{|R||U|}\le I(R, U)\le \frac{|R||U|}{2q}.\]
This infers $|R||U|\le 16q^4$, so $|R|\le 2q^2\le |U|/4$. 

Set $U_1=U\setminus (L\cup R)$, then $|U_1|\gg |U|$, and it satisfies the desired property.
\end{proof}
\begin{proof}[Proof of Theorem \ref{thm1.9}]
Let $U_1$ be the set defined in Lemma \ref{good}, i.e. for all $u\in U_1$, one has $|N(u)| = \Theta\left( |U|q^{-1}\right)$. Let $\mathcal{F}:=\{N(u)\colon u\in U_1\}$. We note that $|\mathcal{F}|=|U_1|\gg |U|$. 

It follows from the definition of $\pi_{\mathcal{F}}(|U'|)$ and \eqref{SS} that
\begin{equation}\label{FU1}
|\mathcal{F}\cap U'|=\#\{A\cap U'\colon A\in \mathcal{F}\}\le \pi_{\mathcal{F}}(|U'|)\ll |U'|^3
\end{equation}
For each $S\in \mathcal{F}\cap U'$, let $m(S)$ be the number of sets $A\in \mathcal{F}$ such that $S=A\cap U'$. We have $\sum_{S}m(S)=|\mathcal{F}|$. By Cauchy-Schwarz, we have 
\begin{equation}\label{FU2}
|\mathcal{F}|^2\le |\mathcal{F}\cap U'|\cdot \sum_{S}m(S)^2.
\end{equation}
Combining \eqref{FU1}, \eqref{FU2}, and the lower bound on $|\mathcal F|,$ we get 
\[\sum_{S}m(S)^2\gg \frac{|U|^2}{|U'|^3},\]
which is what we want to prove.
    
\end{proof}
\section{Proof of Theorem \ref{distance}}
For any $\gamma\in \mathbb{F}_q$, let $R_\gamma = \{(e,f): e\in E, f\in F, ||e-f||=\gamma\}$ and for each $f\in F$ let $R_\gamma(f) = \{e:e\in E, ||e-f|| = \gamma\}$. Additionally, let $T(E, F)$ be the set of triples $(u, v, x)\in E\times E\times F$ such that $||x-u||=||x-v||\ne 0$. By the assumption on the number of pairs at distance $0$, we have that 
\begin{align*}
\frac{|E|^2|F|^2}{4} &\leq \left| \bigcup_{\gamma\not=0} R_\gamma\right|^2 = \left(\sum_{\gamma\not=0} |R_\gamma|\right)^2 \\
&\leq |\Delta(E,F)| \sum_{\gamma\not=0} |R_\gamma|^2 = |\Delta(E,F)| \sum_{\gamma\not=0} \left( \sum_{f} |R_\gamma(f)| \right)^2\\
&\leq |\Delta(E,F)| \sum_{\gamma\not=0} \left( |F| \sum_f |R_\gamma(f)|^2\right) = |\Delta(E,F)|\cdot |F| \cdot |T(E,F)|,
\end{align*}

where both inequalities are by Cauchy-Schwarz. Hence, we have 
\[|\Delta(E, F)|\gg \frac{|E|^2|F|}{|T(E, F)|}.\]
Fix $u\in E$, we observe that for each $v\in E$, the equation $||x-u||=||x-v||$ determines a bisector plane in $\mathbb{F}_q^3$. Let $\Pi_u$ be the set of those planes corresponding to $v\in E$. We now aim to show that each different choice of $v$ leads to a distinct plane in $\Pi_u.$
We note that since $q\equiv 3\mod 4$, any sphere of non-zero radius does not contain any line, i.e. it intersects a line in at most two points. This means that the planes in $\Pi_u$ are distinct. So
\[|T(E, F)|=\sum_{u\in U}I(F, \Pi_u).\]
Under the assumption that $|F| > 2kq^\alpha$, and $k$ is the maximal number of collinear points in $F$ having non-zero distances from any pair of points in $E\times E$, as in the proof of Theorem \ref{lightLines}, we have
\[I(F, \Pi_u)\le \frac{|F||\Pi_u|}{q^\alpha}+q^{2\alpha}|F|.\]
Thus, 
\[|T(E, F)|\ll \frac{|E|^2|F|}{q^\alpha}+q^{2\alpha}|E||F|.\]
So, 
\[|\Delta(E, F)|\gg \min \left\lbrace q^\alpha, \frac{|E|}{q^{2\alpha}}  \right\rbrace.\]
Since $k$ is the maximal number of collinear points in $F$ having non-zero distances from a pair of points in $E\times E$, we also have $|\Delta(E, F)|\ge k/2$. 
Hence, if $|E|\ge q^{3\alpha}$, one has 
\[|\Delta(E, F)|\gg \max\{k, q^\alpha\}.\]
If $|E|\le q^{3\alpha}$, then 
\[|\Delta(E, F)|\gg \max\left\lbrace k, \frac{|E|}{q^{2\alpha}} \right\rbrace.\]
\section{Proof of Theorem \ref{dot-product}}
Because of the assumptions that the number of orthogonal pairs in $E\times F$ is at most $|E||F|/2$, by averaging there is some nonzero $\lambda\in \mathbb{F}_q$ satisfying that the number of pairs in $E\times F$ with $u\cdot v = \lambda$ is at least $|E||F|/|D(E,F)|$. Let $M(E, F)$ be the set of pairs $(u, v)\in E\times F$ such that $u\cdot v=\lambda$. So we have that 
\[|D(E, F)|\gg \frac{|E||F|}{|M(E, F)|},\]

Viewing $E$ as a set of planes and $F$ as a set of points, we apply Theorem \ref{lightLines} to obtain
\[|M(E, F)|\ll \frac{|E||F|}{q^\alpha}+q^{2\alpha}|F|.\]
This gives 
\[|D(E, F)|\gg \min \left\lbrace q^\alpha, \frac{|E|}{q^{2\alpha}} \right\rbrace.\]

We now argue that if we have a line $\ell_0$ with $k$ points from $F$, say $\{u_1, \ldots, u_k\}\subset \ell_0$, then $|D(E, F)|\ge k$. This is more complicated compared to the case of the distance function.

We first observe that for any $\lambda \not=0$, the intersection of the planes defined by $u_i\cdot x = \lambda$ is a line. To see this, consider the intersection of the planes $u_1\cdot x = \lambda$ and $u_2\cdot x = \lambda$ and call this line $\ell_\lambda$. Let $w\in \ell_\lambda$, then it is clear that $u_1, u_2$ belongs to the plane defined by $w\cdot x=\lambda$. Then this plane contains the entire line $\ell_0$. Thus, the line $\ell_\lambda$ is what we want to find. 

The second observation is that if $v\in E$ and $u_i\cdot v=u_j\cdot v=\lambda$, then $u_i\cdot v=\lambda$ for all $1\le i\le k$. This holds for any pair $i\not=j$ in $\ell_0$.

By definition, the lines $\ell_\lambda$ and $\ell_\beta$ are parallel for distinct $\lambda, \beta\ne 0$. Consider the collection of these lines that have at least one point of $E$ on them. Call these lines $\ell_{\lambda_1}, \cdots \ell_{\lambda_t}$. Then $\{\lambda_1,\cdots, \lambda_t\} \in D(E,F)$, and so if $t\geq k$ we are done. If $t < k$, then there must be a $w\in E$ and distinct $u_i$ and $u_j$ such that $w\cdot u_i = w\cdot u_j = \lambda$ for some $\lambda\not=0$. By the observations, we have that the whole line $\ell_0$ is contained in the plane $w\cdot x = \lambda$. Since $E$ is not contained in any single plane, there is some $w'\in E$ which is not contained in this plane, and now $|D(w', \ell_0)|= k$.




In other words, 
\[|D(E, F)|\gg \max\left\lbrace k, ~ \min \left\lbrace q^\alpha, \frac{|E|}{q^{2\alpha}} \right\rbrace\right\rbrace.\]

If $|E|\ge q^{3\alpha}$, one has 
\[|D(E, F)|\gg \max\{k, q^\alpha\}.\]
If $|E|\le q^{3\alpha}$, then 
\[|D(E, F)|\gg \max\left\lbrace k, \frac{|E|}{q^{2\alpha}} \right\rbrace.\]

\section{Acknowledgements}
A. Iosevich was supported, in part, by te HDR Tripods NSF Grant and NSF DMS 2154232 grant. T. Pham would like to
thank to the VIASM for the hospitality and for the excellent working conditions. M. Tait was partially supported by National Science Foundation grant DMS-2011553.

\bibliographystyle{amsplain}

\bibliography{paper}

\end{document}